\documentclass[11pt]{article}
\usepackage[T1]{fontenc}
\usepackage{lmodern}
\usepackage[a4paper,margin=1in]{geometry}
\usepackage{amsmath,amssymb,amsthm,mathtools,microtype}
\usepackage{tikz}
\usetikzlibrary{arrows.meta,positioning,fit,backgrounds}
\usepackage[font=small,labelfont=bf]{caption}
\usepackage[hidelinks]{hyperref}
\usepackage{authblk}

\usepackage{microtype}
\newtheorem{theorem}{Theorem}[section]
\newtheorem{conjecture}[theorem]{Conjecture}
\newtheorem{lemma}[theorem]{Lemma}
\newtheorem{proposition}[theorem]{Proposition}
\newtheorem{corollary}[theorem]{Corollary}
\theoremstyle{remark}
\newtheorem{remark}[theorem]{Remark}
\numberwithin{equation}{section}
\newcommand{\one}{\mathbf{1}}
\title{The saturated spectral radius for complete graphs}
\author[1]{Zhengbo Chen}
\author[1]{Xiao-Dong Zhang\thanks{Corresponding author.}}
\affil[1]{School of Mathematical Sciences, MOE-LSC, SHL-MAC,
Shanghai Jiao Tong University, Shanghai 200240, China\\
\texttt{czb911@sjtu.edu.cn; xiaodong@sjtu.edu.cn}}
\date{}
\begin{document}
\maketitle
\begin{abstract}
A graph is $K_{r+1}$-saturated if it is $K_{r+1}$-free and adding
any missing edge creates a copy of $K_{r+1}$. Kim, Kim, Kostochka,
and O conjectured that $K_{r-1}\vee(n-r+1)K_1$ minimizes the
spectral radius among all $n$-vertex $K_{r+1}$-saturated graphs.
They proved the case $r=2$, and the cases $r=3$ and $r\in\{4,5\}$
were subsequently established by Kim, Kostochka, O, Shi, and Wang,
and by Wang and Hou, respectively. We settle the conjecture for all
$r\ge3$: if $n\ge r+1$ and $G$ is an $n$-vertex
$K_{r+1}$-saturated graph, then
\[
\rho(G)\ge
\frac{r-2+\sqrt{(r-2)^2+4(r-1)(n-r+1)}}{2},
\]
with equality if and only if $G\cong K_{r-1}\vee(n-r+1)K_1$.
We also prove O's local two-walk conjecture for every $r\ge2$:
\[
\sum_{w\in N_G(v)}d_G(w)
\ge (r-2)d_G(v)+(r-1)(n-r+1)
\qquad(v\in V(G)).
\]
If $G$ has no universal vertex, the inequality holds with the additional
term $(r-1)(r-2)$ on the right-hand side. This constant is best possible
uniformly in $n$ for every fixed $r$, and gives a strict improvement
when $r\ge3$. A corresponding spectral bound follows.
\end{abstract}
\par\medskip
\noindent\textbf{Keywords:}
Graph saturation; spectral radius; two-walk counts;
$\alpha$-critical graphs.\\
{\it AMS Classification:} 05C50; 05C35; 05C07

\section{Introduction}

For a fixed graph $F$, a graph $G$ is called \emph{$F$-saturated} if $G$ is $F$-free and $G+e$ contains a copy of $F$ for every $e\in E(\overline{G})$. The \emph{saturation number} $\operatorname{sat}(n,F)$ is the minimum number of edges in an $n$-vertex $F$-saturated graph.

A spectral analogue is obtained by minimizing the adjacency spectral radius. Writing $\rho(G)$ for the adjacency spectral radius of $G$, we define the \emph{spectral saturation number} by
\[
\operatorname{sat}_{\rho}(n,F)
=\min\bigl\{\rho(G): |V(G)|=n,\ G\text{ is }F\text{-saturated}\bigr\}.
\]
The associated extremal problems are to determine these two parameters and characterize the graphs attaining the respective minima. In this paper, we focus on the spectral saturation problem for complete graphs.

For integers $r\ge2$ and $n\ge r+1$, let
$S_{n,r}=K_{r-1}\vee(n-r+1)K_1$.
The theorem of Erd\H{o}s, Hajnal, and Moon~\cite{EHM1964} states
that every $n$-vertex $K_{r+1}$-saturated graph $G$ satisfies
\[
e(G)\ge (r-1)n-\binom{r}{2},
\]
with equality if and only if $G\cong S_{n,r}$.
Thus the minimum edge count and its unique extremal structure
are completely determined.

A natural spectral analogue asks whether $S_{n,r}$ also minimizes
the adjacency spectral radius among graphs in the same class.
This replaces the number of edges by a quantity that also reflects
how those edges are distributed among the vertices.
Its spectral radius is
\[
\rho(S_{n,r})=
\frac{r-2+\sqrt{(r-2)^2+4(r-1)(n-r+1)}}{2}.
\]
The classical edge bound alone does not yield this spectral bound:
combining it with $\rho(G)\ge 2e(G)/n$ gives only
\[
\rho(G)\ge 2(r-1)-\frac{r(r-1)}{n},
\]
which remains bounded for fixed $r$, whereas
$\rho(S_{n,r})\sim\sqrt{(r-1)n}$ as $n\to\infty$.
Obtaining the sharp spectral bound therefore requires additional
information about the local structure imposed by saturation.
Kim, Kim, Kostochka, and O~\cite{KKKO2020} proposed the following
conjecture.

\begin{conjecture}\label{conj:spectral}
Let $r\ge3$ and $n\ge r+1$. If $G$ is an $n$-vertex
$K_{r+1}$-saturated graph, then $\rho(G)\ge\rho(S_{n,r})$.
\end{conjecture}

Kim, Kim, Kostochka, and O~\cite{KKKO2020} first obtained an
asymptotically sharp lower bound for general $r$. More precisely,
by establishing a sharp lower bound on
$\sum_{v\in V(G)}d_G(v)^2$, they proved that every $n$-vertex
$K_{r+1}$-saturated graph $G$, with $n\ge r+1$ and $r\ge2$,
satisfies
\[
\rho(G)\ge
\sqrt{\frac{(r-1)(n-1)^2+(r-1)^2(n-r+1)}{n}}.
\]
For every fixed $r$, this bound is asymptotically sharp as
$n\to\infty$. They also settled the case $r=2$: every $n$-vertex
$K_3$-saturated graph satisfies $\rho(G)\ge\sqrt{n-1}$, with
equality if and only if $G$ is the star $K_{1,n-1}$ or a Moore
graph of diameter two.

Further progress established the conjectured exact bound
for several small values of $r$.
Kim, Kostochka, O, Shi, and Wang~\cite{KKOSW2023}
proved Conjecture~\ref{conj:spectral} for $r=3$ and showed
that $K_2\vee(n-2)K_1$ is the unique extremal graph.
Wang and Hou~\cite{WH2024} subsequently confirmed the
conjecture for $r=4$ and $r=5$.
Ai, Liu, O, and Zhang~\cite{ALOZ2025} independently established
the case $r=4$ by proving that every $n$-vertex $K_5$-saturated
graph $G$ satisfies
\[
\sum_{w\in N_G(v)}d_G(w)
\ge 2d_G(v)+3(n-3)
\qquad\text{for every }v\in V(G).
\]
Their result provides another proof of the sharp spectral
bound for $K_5$-saturated graphs, including the uniqueness
of the extremal graph $K_3\vee(n-3)K_1$.

Local walk counts provide a way to capture the additional structural
information needed for the spectral problem. If $A=A(G)$ is the
adjacency matrix and $\one$ is the all-ones vector, then
$(A\one)_v=d_G(v)$ and
\[
(A^2\one)_v=\sum_{w\in N_G(v)}d_G(w),
\]
the number of walks of length two starting at $v$.
A pointwise comparison between these quantities can be converted
into a quadratic inequality for the spectral radius by weighting
with a positive Perron eigenvector. Such a comparison also retains
information at individual vertices, which is useful for determining
the equality case. This motivates the following local two-walk
conjecture, recorded in~\cite[Problem 13B-2]{O2025}.

\begin{conjecture}\label{conj:local}
Let $r\ge2$ and $n\ge r+1$. If $G$ is an $n$-vertex
$K_{r+1}$-saturated graph, then every $v\in V(G)$ satisfies
\[
\sum_{w\in N_G(v)}d_G(w)
\ge (r-2)d_G(v)+(r-1)(n-r+1).
\]
\end{conjecture}

Kim, Kostochka, O, Shi, and Wang~\cite[Theorem 2.2]{KKOSW2023}
proved that Conjecture~\ref{conj:local} implies the spectral bound
in Conjecture~\ref{conj:spectral}. We establish the local conjecture
for every $r\ge2$ and determine the spectral equality case for
every $r\ge3$. Consequently, for $r\ge3$, the graph $S_{n,r}$
uniquely minimizes both the number of edges and the adjacency
spectral radius among $n$-vertex $K_{r+1}$-saturated graphs.
This gives an exact spectral counterpart to the classical
Erd\H{o}s--Hajnal--Moon theorem. For $r\ge3$, we also obtain a sharp improvement of the local
inequality for graphs with no universal vertex, together with
a corresponding stronger spectral bound.

\begin{theorem}\label{thm:spectral}
Let $r\ge3$ and $n\ge r+1$. If $G$ is an $n$-vertex
$K_{r+1}$-saturated graph, then
\[
\rho(G)\ge
\frac{r-2+\sqrt{(r-2)^2+4(r-1)(n-r+1)}}{2},
\]
with equality if and only if $G\cong K_{r-1}\vee(n-r+1)K_1$.
\end{theorem}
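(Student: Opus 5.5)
The plan is to derive Theorem~\ref{thm:spectral} from the local two-walk inequality (Conjecture~\ref{conj:local}), which the paper proves for every $r\ge2$, via a Perron-vector argument. We may assume $G$ is connected: a $K_{r+1}$-saturated graph with $n\ge r+1$ has diameter at most two, since any two nonadjacent vertices have $r-1\ge 1$ common neighbours. Let $A=A(G)$, $\rho=\rho(G)$, and let $x>0$ be the unit Perron vector. The local inequality says, entrywise,
\[
A^2\one\ \ge\ (r-2)A\one+(r-1)(n-r+1)\one .
\]
Taking the inner product with $x$ and using $x^{\top}A^k=\rho^k x^{\top}$ gives
\[
\bigl(\rho^2-(r-2)\rho-(r-1)(n-r+1)\bigr)\,x^{\top}\one\ \ge 0 .
\]
Since $x^{\top}\one>0$, we get $\rho^2-(r-2)\rho-(r-1)(n-r+1)\ge0$. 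The larger root of this quadratic is exactly the stated bound, and $\rho>0$, so $\rho$ is at least that root. This is precisely \cite[Theorem 2.2]{KKOSW2023}.

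For equality, note that equality in the quadratic forces equality in the weighted sum. As $x>0$ entrywise, the local inequality must then be tight at every vertex $v$:
\[
\sum_{w\in N_G(v)}d_G(w)=(r-2)d_G(v)+(r-1)(n-r+1).
\]
The main obstacle is converting this pointwise tightness into $G\cong S_{n,r}$. First, I would use the paper's strengthened inequality for graphs with no universal vertex, which adds the term $(r-1)(r-2)>0$ when $r\ge3$. Tightness at any single vertex is incompatible with that strengthening, so $G$ has a universal vertex $u$.

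Next, I would remove $u$. Then $G-u$ is $K_r$-saturated on $n-1$ vertices: any $K_r$ in $G-u$ together with $u$ would give a $K_{r+1}$ in $G$, and any missing edge $e$ of $G-u$ creates a $K_{r+1}$ in $G+e$. That clique contains $e$ and has $r+1$ vertices, so deleting $u$ if present, or otherwise any other vertex not on $e$, leaves a $K_r$ through $e$ in $G-u+e$. Applying the tightness identity at a vertex $v\ne u$ and subtracting the contributions of $u$ should turn it into the corresponding tight identity for $G-u$ with parameter $r-1$, with the constants $(r-1)(n-r+1)$ adjusting correctly. If this bookkeeping works, induction on $r$ shows that $G-u$ is tight everywhere, hence $G-u\cong S_{n-1,r-1}$ and $G\cong K_1\vee S_{n-1,r-1}=S_{n,r}$.

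The base case $r=3$ gives $G-u$ triangle-free saturated, where the Moore graphs also occur. There I would check tightness directly: tightness of $G-u$ at $v$ reads $\sum_{w\in N(v)}d(w)=n-2$, which is valid for stars but fails for Moore graphs of degree $\ge2$. If the reduction identity does not transfer cleanly, the fallback is to iterate the universal-vertex argument inside $G$ itself. Repeated tightness yields $r-1$ mutually adjacent universal vertices, forming a clique $K$. The remaining vertices must then be independent, since any edge outside $K$ together with $K$ would form a $K_{r+1}$. This directly gives $G\cong S_{n,r}$. Finally, $S_{n,r}$ attains the bound by the explicit formula for $\rho(S_{n,r})$.
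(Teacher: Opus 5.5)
Your Perron-vector derivation of the inequality is the same as the paper's. The reduction you hedge on also works: for $v\ne u$ one has $g_r(G,v)=g_{r-1}(G-u,v)$, which is the identity \eqref{eq:remove-universal} with $t=1$. The genuine gap is in the base case $r=3$, and that is where the difficulty of the equality case lies.

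Your assertion that Moore graphs fail tightness is false. For a $d$-regular Moore graph $M$ on $d^2+1$ vertices, $\sum_{w\in N_M(v)}d_M(w)=d^2=|V(M)|-1$, so $M$ is tight at every vertex with parameter $2$. This is exactly why $C_5$ is an extra equality case for $r=2$. Consequently $G=K_{r-2}\vee C_5$ is $K_{r+1}$-saturated and satisfies $g_r(G,v)=g_2(C_5,v)=0$ at all five non-universal vertices. Your induction only transfers tightness at vertices $v\ne u$, so it cannot exclude $K_1\vee C_5$ when $r=3$, nor $K_{r-2}\vee M$ in general.

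The fallback has the same hole. The strengthened inequality produces a universal vertex only while the current parameter $k$ satisfies $(k-1)(k-2)>0$, i.e.\ $k\ge3$. Iterating therefore yields $r-2$ universal vertices, not $r-1$, and nothing you propose produces the last one.

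The missing ingredient is tightness at the universal vertices themselves. Let $t$ be the number of universal vertices, and write $G=K_t\vee G_0$ with $k=r-t$ and $m=n-t$, so $G_0$ has no universal vertex. For universal $u$, $\sum_{w\in N_G(u)}d_G(w)=2e(G)-(n-1)$, which gives $g_r(G,u)=2e(G_0)-2(k-1)m+k(k-1)$. Hajnal's bound (Lemma~\ref{lem:degree}) gives $d_{G_0}(w)\ge 2k-2$, hence $g_r(G,u)\ge k(k-1)>0$ when $k\ge2$.

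In your base case this reads $g_3(G,u)=2e(G-u)-2(n-1)+2\ge 2$ whenever $G-u$ has no universal vertex, since then $G-u$ has minimum degree at least $2$. So $G-u$ must contain a universal vertex $w$. Triangle-freeness then makes $G-u-w$ edgeless, so $G-u$ is a star. With this added, your induction closes.

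The paper avoids induction altogether. It deletes all $t$ universal vertices at once, excludes $t=0$ by Proposition~\ref{prop:no-isolates}, and excludes $t>0$ with $k\ge2$ by the universal-vertex surplus \eqref{eq:universal-surplus}. This leaves $k=1$, i.e.\ $G\cong K_{r-1}\vee(n-r+1)K_1$.
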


\begin{theorem}\label{thm:local}
Let $r\ge2$ and $n\ge r+1$. If $G$ is an $n$-vertex
$K_{r+1}$-saturated graph, then every $v\in V(G)$ satisfies
\begin{equation}\label{eq:local}
\sum_{w\in N_G(v)}d_G(w)
\ge(r-2)d_G(v)+(r-1)(n-r+1).
\end{equation}
If $G$ has no universal vertex, then
\begin{equation}\label{eq:strong}
\sum_{w\in N_G(v)}d_G(w)
\ge(r-2)d_G(v)+(r-1)(n-1)
\end{equation}
holds. For $r\ge3$, this improves the right-hand side of
\eqref{eq:local} by $(r-1)(r-2)$; for $r=2$, the two bounds coincide.
The additional constant $(r-1)(r-2)$ is best possible uniformly in $n$
for every fixed $r$.
\end{theorem}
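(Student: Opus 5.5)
The plan is to fix $v$ and reduce \eqref{eq:local} to an edge-counting inequality for the neighbourhood $N=N_G(v)$. Put $d=d_G(v)$ and let $M=V(G)\setminus(N\cup\{v\})$, so $|M|=n-1-d$. Then
\[
\sum_{w\in N}d_G(w)=d+2e(G[N])+e(N,M).
\]
For each $u\in M$, adding $uv$ creates a $K_{r+1}$. Hence $u$ and $v$ have $r-1$ common neighbours that span a $K_{r-1}$, and in particular $d_N(u)\ge r-1$. Write $d_N(u)=(r-1)+s(u)$ with $s(u)\ge0$. A direct computation shows that \eqref{eq:local} is equivalent to
\[
e(G[N])+\tfrac12\sum_{u\in M}s(u)\ \ge\ (r-2)d-\binom{r-1}{2},
\]
and \eqref{eq:strong} is equivalent to the same inequality with right-hand side $(r-2)d$. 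The first is exactly an Erd\H{o}s--Hajnal--Moon-type bound for $G[N]$, viewed as a $K_{r-1}$-saturated graph on $d$ vertices. The second adds the surplus $\binom{r-1}{2}$.

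The main obstacle is that $G[N]$ is $K_r$-free but need not be $K_r$-saturated. If $x,y\in N$ are nonadjacent, the $K_{r+1}$ created by adding $xy$ may use vertices of $M$ rather than $v$. The plan is to prove a weighted saturation lemma in the style of Bollobás's set-pair inequality, equivalently Lovász's bound for $\alpha$-critical graphs, which is what the keywords suggest. For every missing edge $xy$ of $G[N]$, fix an $(r-1)$-clique $Q_{xy}$ that completes $x,y$ to a $K_{r+1}$. If $Q_{xy}\subseteq N$, the non-edge is saturated inside $N$. Otherwise some $u\in Q_{xy}\cap M$ is adjacent to $x$ and $y$ and to $Q_{xy}\cap N$. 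I will try to show that such a $u$ has $N(u)\cap N$ strictly larger than a single $(r-1)$-clique, so $s(u)>0$, and that the total surplus $\frac12\sum s(u)$ pays for the non-edges that are not saturated within $N$. I would first attempt an extension argument: add to $G[N]$ every edge whose saturation is witnessed through $M$. This should give a genuinely $K_{r-1}$-saturated graph $H$ on the vertex set $N$ with $e(H)\le e(G[N])+\frac12\sum_u s(u)$, to which Erd\H{o}s--Hajnal--Moon applies. I expect this step to be the hardest. If it fails, the fallback is to apply the Bollobás inequality directly to the pairs (non-edge, complementary independent set) built from the cliques $Q_{xy}$, with the $M$-vertices entering as extra elements.

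For \eqref{eq:strong}, assume no universal vertex, so every vertex of $N$ has a non-neighbour. The uniqueness statement in Erd\H{o}s--Hajnal--Moon says that equality forces $H\cong K_{r-2}\vee(d-r+2)K_1$. I would then show that the $r-2$ vertices of the $K_{r-2}$ core, together with the witnesses in $M$, would have to be universal in $G$. Ruling this out forces at least $\binom{r-1}{2}$ additional edges or surplus, obtained by counting, for each core vertex $c$, the $K_{r+1}$'s created by adding an edge from $c$ to a non-neighbour. When $d<r-1$ both inequalities are trivial or vacuous, since saturation with $n\ge r+1$ forces $d\ge r-1$; I would check this separately.

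Finally, for sharpness of $(r-1)(r-2)$, I would exhibit for each fixed $r$ and infinitely many $n$ a $K_{r+1}$-saturated graph with no universal vertex in which equality holds in \eqref{eq:strong} at some vertex. A natural candidate is a blow-up or join of a $K_3$-saturated, Moore-type graph with $K_{r-2}$ modified to remove universality, for example $C_5$-based constructions. The check is that $G[N]$ attains the bound exactly while $s\equiv0$ on $M$. For $r=2$ the two bounds visibly coincide.
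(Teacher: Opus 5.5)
Your reduction is correct. With $s(u)=d_N(u)-(r-1)\ge0$, inequality \eqref{eq:local} is equivalent to $e(G[N])+\frac12\sum_{u\in M}s(u)\ge (r-2)d-\binom{r-1}{2}$, and \eqref{eq:strong} is equivalent to the same inequality with right-hand side $(r-2)d$. Two index slips: these are the Erd\H{o}s--Hajnal--Moon numbers for $K_r$-saturated graphs, not $K_{r-1}$-saturated ones. Also, $Q_{xy}\subseteq N$ is impossible, because $Q_{xy}\cup\{x\}$ would be a $K_r$ inside $N$; you mean $Q_{xy}\subseteq N\cup\{v\}$.

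The real problem is that your reduction is an exact equivalence, so the ``weighted saturation lemma'' is the whole theorem, and you do not prove it. The extension step fails on two counts:
\begin{itemize}
\item Nothing prevents the added $M$-witnessed non-edges from creating a $K_r$ inside $N$, and then EHM does not apply.
\item Bounding the number of added edges by $\frac12\sum_u s(u)$ requires each unit of surplus to pay for at most two non-edges. The only local constraint on the non-edges witnessed through $u$ is that they lie in $N(u)\cap N$, which can contain up to $s(u)(r-1)+\binom{s(u)}{2}$ non-edges. Your observation that $s(u)\ge1$ is far too weak for this.
\end{itemize}
For \eqref{eq:strong} there is a second gap. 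EHM uniqueness gives only one extra edge when the graph is not extremal, not the $\binom{r-1}{2}$ you need. The claim that the core ``would have to be universal'' is not argued. Your argument does work when $v$ itself is universal: then $M=\varnothing$, $G-v$ is $K_r$-saturated, and EHM applies directly.

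The missing idea is a per-vertex degree bound in place of an edge count inside $N$. Pass to the $\alpha$-critical complement $H=\overline G$. For each $y\in M$, fix $I_y\subseteq N$ with $|I_y|=r-1$ such that $\{v,y\}\cup I_y$ is independent in $H-vy$. For $z\in N$, let $F_z=\{y\in M: z\in I_y\}$, so that $\sum_{z\in N}|F_z|=(r-1)|M|$.

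When $G$ has no universal vertex, the paper shows $d_G(z)\ge|F_z|+2r-3$:
\begin{itemize}
\item The sets $\{v\}\cup I_y$ with $y\in F_z$ are maximum independent sets. Hajnal's intersection--union lemma gives $|D|+|C|\ge 2r-4$, where $D$ and $C$ are their union and intersection with $v,z$ removed.
\item Sur\'anyi's inequality at $z$, applied to the independent set $C\cup\{z\}$, gives $|N_W(C)|\ge|C|$ in $W=H-N_H[z]$.
\item The sets $\{v\}$, $F_z$, $D$, $N_W(C)$ are disjoint subsets of $V(W)=N_G(z)$.
\end{itemize}
Summing over $z\in N$ gives \eqref{eq:strong} immediately. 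Then \eqref{eq:local} follows by deleting the $t$ universal vertices: one checks $g_r(G,v)=g_{r-t}(G_0,v)$, and the universal vertices are handled via Hajnal's bound $d_{G_0}(w)\ge 2(r-t)-2$.

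Finally, sharpness needs an explicit graph. $\overline{C_{2r+1}}$ is $(2r-2)$-regular with $\alpha$-critical complement and attains equality in \eqref{eq:strong}. One value of $n$ per $r$ suffices. Your join with $K_{r-2}$ would reintroduce universal vertices, which is exactly the excluded case.
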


The bound \eqref{eq:strong} also yields the following spectral consequence.

\begin{corollary}\label{cor:no-universal-spectral}
Let $r\ge2$ and $n\ge r+1$. If $G$ is an $n$-vertex
$K_{r+1}$-saturated graph with no universal vertex, then
\begin{equation}\label{eq:strong-spectral}
\rho(G)\ge
\frac{r-2+\sqrt{(r-2)^2+4(r-1)(n-1)}}{2}.
\end{equation}
For every $r\ge2$, equality is attained by $G=\overline{C_{2r+1}}$.
\end{corollary}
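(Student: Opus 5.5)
The corollary follows from \eqref{eq:strong} in Theorem~\ref{thm:local} by the usual Perron-vector weighting argument, plus a direct check of $\overline{C_{2r+1}}$.

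\emph{Setting up.} Since $G$ is $K_{r+1}$-saturated with $n\ge r+1$, it is connected: two vertices in different components could be joined without creating a new clique. So $A=A(G)$ has a positive unit Perron eigenvector $x$ with $Ax=\rho x$, where $\rho=\rho(G)$.

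\emph{The weighting step.} Write \eqref{eq:strong} in vector form as the entrywise inequality
\[
A^2\one\ \ge\ (r-2)A\one+(r-1)(n-1)\one .
\]
Take the inner product with $x$. By symmetry of $A$ we have $x^{T}A^2\one=\rho^2x^{T}\one$ and $x^{T}A\one=\rho\, x^{T}\one$. Because $x^{T}\one>0$, dividing through gives
\[
\rho^2\ge (r-2)\rho+(r-1)(n-1).
\]
The positive root of $t^2-(r-2)t-(r-1)(n-1)$ is exactly the right-hand side of \eqref{eq:strong-spectral}. Since $\rho>0$ and the quadratic is negative between its roots, \eqref{eq:strong-spectral} follows. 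This is the argument of \cite[Theorem 2.2]{KKOSW2023}, applied with $(r-1)(n-1)$ in place of $(r-1)(n-r+1)$.

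\emph{The equality example.} Take $G=\overline{C_{2r+1}}$, so $n=2r+1$ and $G$ is $(2r-2)$-regular.
\begin{itemize}
\item $K_{r+1}$-free: a clique in $G$ is an independent set in $C_{2r+1}$, and these have size at most $r$.
\item Saturated: the missing edges of $G$ are the edges $\{i,i+1\}$ of the cycle. Removing $i,i+1$ from $C_{2r+1}$ leaves a path on $2r-1$ vertices, which has an independent set of size $r$. Together with $\{i,i+1\}$ this gives an independent set of size $r+1$ in $C_{2r+1}+\dots$ minus the edge $\{i,i+1\}$, i.e.\ a $K_{r+1}$ in $G+\{i,i+1\}$.
\item No universal vertex, since $2r-2<n-1$.
\item $\rho(G)=2r-2$ by regularity.
\end{itemize}
It remains to check that $t=2r-2$ satisfies $t^2-(r-2)t-(r-1)(n-1)=0$ with $n-1=2r$:
\[
(2r-2)^2-(r-2)(2r-2)-(r-1)(2r)=(2r-2)\bigl[(2r-2)-(r-2)-r\bigr]=0 .
\]
So equality holds in \eqref{eq:strong-spectral}.

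The only substantive input is \eqref{eq:strong} itself. The corollary contains no real obstacle beyond verifying connectivity and computing the example.
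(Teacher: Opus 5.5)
Your spectral argument is the same as the paper's. You get connectivity from saturation, pair the entrywise inequality \eqref{eq:strong} with a positive Perron vector to obtain $\rho^2-(r-2)\rho-(r-1)(n-1)\ge0$, and check that $2r-2$ is the positive root when $n=2r+1$. That part, and your checks that $\overline{C_{2r+1}}$ is $K_{r+1}$-free, $(2r-2)$-regular and has no universal vertex, are fine.

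The step that fails as written is your saturation check for $\overline{C_{2r+1}}$. Removing the vertices $i,i+1$ from $C_{2r+1}$ leaves the path $i+2,i+3,\dots,i-1$ on $2r-1$ vertices. Its only independent set of size $r$ is the set of alternate vertices, so it contains both endpoints $i+2$ and $i-1$. These are adjacent to $i+1$ and $i$, so the set cannot be combined with $\{i,i+1\}$. In any case the count would be $r+2$, not $r+1$.

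What you need is an independent set of size $r-1$ avoiding $N_H[i]\cup N_H[i+1]=\{i-1,i,i+1,i+2\}$. Such a set lies inside the path $i+3,\dots,i-2$ on $2r-3$ vertices, which has independence number $r-1$. Equivalently, $\{i+1,i+3,\dots,i+2r-1,i\}$ (indices mod $2r+1$) is an independent set of size $r+1$ in $C_{2r+1}$ with the edge $\{i,i+1\}$ deleted, i.e.\ a $K_{r+1}$ in $G+\{i,i+1\}$.

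This is just the standard fact that odd cycles are $\alpha$-critical with $\alpha(C_{2r+1})=r$, which the paper asserts without proof in the sharpness part of Theorem~\ref{thm:local}. With this correction your proof is complete.
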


Our main new ingredient is a uniform
bound on independent set multiplicities. For comparison, the proof for
$K_5$-saturated graphs in~\cite[Section 4, Case 2]{ALOZ2025} estimates
the local two-walk count through a case analysis involving triangles
in a vertex neighborhood. In Proposition~\ref{prop:no-isolates}, we
work in the complement and control the overlaps of independent sets
using two inequalities. Hajnal's intersection--union lemma bounds
the sum of the sizes of the union and intersection of a family of
maximum independent sets. Sur\'anyi's inequality bounds the
neighborhood of that intersection inside an anti-neighborhood.
Together, they yield the multiplicity estimate
\eqref{eq:multiplicity} for every $r\ge2$.

Both ingredients apply without a connectedness hypothesis on the
complement. The final reduction therefore consists of deleting the
universal vertices, applying Proposition~\ref{prop:no-isolates} to the
remaining graph, and tracking the local surplus when the deleted
vertices are restored.

\section{Preliminaries}

All graphs considered in this paper are finite and simple. For a graph
$G$, let $V(G)$ and $E(G)$ denote its vertex set and edge set,
respectively, and write $e(G)=|E(G)|$. For a vertex $v\in V(G)$,
let $N_G(v)$ denote its open
neighborhood, let $N_G[v]=N_G(v)\cup\{v\}$ denote its closed
neighborhood, and write $d_G(v)=|N_G(v)|$ for its degree. A vertex
of an $n$-vertex graph is called
\emph{universal} if it has degree $n-1$. Throughout the paper, we write
$H=\overline G$ for the complement of $G$.

For two vertex-disjoint graphs $G_1$ and $G_2$, their \emph{join},
denoted by $G_1\vee G_2$, is obtained from their disjoint union by
adding all edges between $V(G_1)$ and $V(G_2)$. As usual, $K_r$
denotes the complete graph on $r$ vertices, $C_\ell$ denotes the cycle
on $\ell$ vertices, and $mK_1$ denotes the edgeless graph on $m$ vertices.

Given a graph $F$, a graph $G$ is called \emph{$F$-saturated} if
$G$ is $F$-free but $G+e$ contains a copy of $F$ for every
$e\in E(H)$. Let $A(G)$ denote the adjacency matrix of
$G$, and let $\rho(G)$ denote its spectral radius. For
$v\in V(G)$, the number of walks of length two starting at $v$ is
$\sum_{w\in N_G(v)}d_G(w)
$.

For $A\subseteq V(H)$, write
$N_H(A)=\bigcup_{a\in A}N_H(a)$ for its open neighborhood. An
independent set has no internal edges, and $\alpha(H)$ denotes the
maximum size of an independent set in $H$. We call $H$
$\alpha$-critical if $\alpha(H-e)=\alpha(H)+1$ for every
$e\in E(H)$; isolated vertices are allowed in this definition.

For $n\ge r+1$, a graph $G$ is $K_{r+1}$-saturated if and only if
its complement $H=\overline G$ is $\alpha$-critical and
$\alpha(H)=r$.
Indeed, adding an missing edge of $G$ is deleting an edge of $H$, and a
new $(r+1)$-clique is a new independent set of size $r+1$ in $H$.



\begin{lemma}[{\cite[Theorem~3.5]{Lovasz1994}}]
\label{lem:suranyi}
Let $H$ be an $\alpha$-critical graph without isolated vertices.
Then, for every independent set $A\subseteq V(H)$ and every $a\in A$,
\begin{equation}\label{eq:suranyi}
d_H(a)\le |N_H(A)|-|A|+1.
\end{equation}
\end{lemma}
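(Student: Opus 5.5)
The plan is to reduce \eqref{eq:suranyi} to a Hall-type surplus statement and prove that statement by exchange arguments with the maximum independent sets supplied by criticality. Write $A_0=A\setminus\{a\}$ and $B=N_H(A)$. Since $N_H(a)\subseteq B$, inequality \eqref{eq:suranyi} is equivalent to
\[
|B\setminus N_H(a)|\ \ge\ |A_0| .
\]
So the goal is to find $|A_0|$ distinct vertices of $N_H(A_0)$ that are not adjacent to $a$. By Hall's theorem, it suffices to show $|N_H(S)\setminus N_H(a)|\ge |S|$ for every $S\subseteq A_0$. Since $S\cup\{a\}$ is again independent, I may replace $A$ by $S\cup\{a\}$ and simply prove $|N_H(A)\setminus N_H(a)|\ge |A|-1$ for every independent $A\ni a$.

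\textbf{Step 1: independent sets from criticality.} For each edge $e=xy$, criticality gives an independent set $T_e$ of $H-e$ with $|T_e|=\alpha(H)+1$ and $x,y\in T_e$. Then $T_e\setminus\{x\}$ and $T_e\setminus\{y\}$ are maximum independent sets of $H$. Since $H$ has no isolated vertices, $a$ has a neighbour $b$. Put $J=T_{ab}\setminus\{b\}$, a maximum independent set containing $a$, with $J\cap N_H(a)=\emptyset$.

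\textbf{Step 2: the exchange inequality.} For any maximum independent set $I$, the set $(I\setminus(A\cup B))\cup A$ is independent, because every neighbour of $A$ lies in $B$. Hence $|I\cap(A\cup B)|\ge |A|$. Applied to $I=J$ this gives
\[
|J\cap B|\ \ge\ |A|-|J\cap A| .
\]
Moreover $J\cap B\subseteq B\setminus N_H(a)$. This already proves the claim when $J\cap A=\{a\}$.

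\textbf{Step 3: the main obstacle.} The difficulty is the case where $J$ meets $A$ in more than $a$. I would argue by induction on $|A|$, first applying the same exchange argument to $A'=A\setminus(J\setminus\{a\})$.

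To close the induction, each $c\in (J\cap A)\setminus\{a\}$ must contribute a fresh vertex of $B\setminus N_H(a)$. I would take an edge $cd$ and the set $T_{cd}$, with $d\in N_H(c)\setminus N_H(a)$ if such a $d$ exists. Comparing $T_{cd}$ with $J$ via the same replacement trick should give $d\notin$ the set already counted.

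If instead $N_H(c)\subseteq N_H(a)$ for such a $c$, I would use $T_{ac'}$ for a suitable neighbour $c'$ of $c$, which is also a neighbour of $a$. This produces a maximum independent set through $c'$ avoiding $a$, and the exchange inequality is then applied to it with the roles of $a$ and $c$ swapped.

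Making this bookkeeping produce an injective choice of fresh vertices is the step I expect to be delicate. If it resists, I would fall back on the classical route to Sur\'anyi's inequality in~\cite{Lovasz1994}. That route removes $N_H[a]$ and applies Hajnal's inequality $|N_H(S)|\ge|S|$ to the independent set $A_0$ in a suitably modified $\alpha$-critical graph, obtained by deleting the non-critical edges of $H-N_H[a]$ and its isolated vertices.
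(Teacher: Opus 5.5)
Your reduction and Steps 1--2 are correct, and they settle the case $J\cap A=\{a\}$. The appeal to Hall's theorem is superfluous, since $B\setminus N_H(a)=N_H(A_0)\setminus N_H(a)$ already, but it is harmless. The gap is Step 3, which carries the whole content of the lemma. Because $J$ is independent and contains each $c\in(J\cap A)\setminus\{a\}$, any neighbour $d$ of such a $c$ automatically avoids $J$. The real requirement is $|(J\cap A)\setminus\{a\}|$ \emph{distinct} vertices of $N_H\bigl((J\cap A)\setminus\{a\}\bigr)\setminus N_H(a)$, and that is exactly the lemma for the set $J\cap A$. Per-vertex comparisons of $T_{cd}$ with $J$ do not supply this. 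Moreover, when $A\subseteq J$ there is no smaller instance to fall back on. The fallback route is also incomplete. The $\alpha$-critical spanning subgraph $W'$ of $W=H-N_H[a]$ may a priori have isolated vertices, and any vertex of $A_0$ discarded with them is lost. Hajnal's inequality then bounds only $|N_W(A_0\setminus Z)|$, and you give no argument excluding this. (The paper itself does not prove the lemma; it quotes Lov\'asz's Theorem~3.5.)

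Your induction on $|A|$ does close if you apply the induction hypothesis to the whole set $A\cap J$ and choose $b$ so that $A\not\subseteq J$. Suppose first that some $c\in A_0$ has a neighbour $x\in N_H(a)$, and take $b=x$. Then $c\notin J=T_{ax}\setminus\{x\}$, since otherwise $cx$ would be a second edge inside $T_{ax}$. Put $A''=A\cap J$ and $A'=(A\setminus J)\cup\{a\}$, so that $|A''|<|A|$ and $|A'|+|A''|=|A|+1$. Step 2 applied to $A'$ gives $|J\cap N_H(A')|\ge|A'|-1$. Induction applied to $A''$ at $a$ gives $|N_H(A''\setminus\{a\})\setminus N_H(a)|\ge|A''|-1$. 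The first set lies in $J$, the second is disjoint from $J$ because $A''\subseteq J$, and both lie in $N_H(A)\setminus N_H(a)$. Hence $|N_H(A)\setminus N_H(a)|\ge(|A'|-1)+(|A''|-1)=|A|-1$. If instead $N_H(A_0)\cap N_H(a)=\varnothing$, then $N_H(A)\setminus N_H(a)=N_H(A_0)$. Induction applied to $A_0$ at any $c\in A_0$, together with $d_H(c)\ge1$, gives $|N_H(A_0)|\ge d_H(c)+|A_0|-1\ge|A_0|$. With the trivial base case $|A|=1$, this is a complete elementary proof, and no bookkeeping of individual fresh vertices is needed.
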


\begin{lemma}[{\cite{Hajnal1965}}]
\label{lem:degree}
Let $G$ be a graph and let $H=\overline G$. If $H$ is
$\alpha$-critical without isolated vertices, then every $a\in V(H)$
satisfies
\begin{equation}\label{eq:degree}
d_H(a)\le |V(H)|-2\alpha(H)+1,
\qquad
d_G(a)\ge 2\alpha(H)-2.
\end{equation}
\end{lemma}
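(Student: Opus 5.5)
The plan is to derive \eqref{eq:degree} directly from Sur\'anyi's inequality, Lemma~\ref{lem:suranyi}. We apply it to an independent set $A$ of maximum size $\alpha(H)$ that contains the given vertex $a$.

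\emph{Step 1: every vertex lies in a maximum independent set.} Since $H$ has no isolated vertices, $a$ has a neighbour $b$. By $\alpha$-criticality, $\alpha(H-ab)=\alpha(H)+1$. Let $I$ be an independent set of $H-ab$ of size $\alpha(H)+1$. The set $I$ cannot be independent in $H$, so it must contain both endpoints $a$ and $b$. Then $A=I\setminus\{b\}$ is independent in $H$, has size $\alpha(H)$, and contains $a$.

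\emph{Step 2: apply Lemma~\ref{lem:suranyi}.} With this $A$, inequality \eqref{eq:suranyi} gives $d_H(a)\le |N_H(A)|-\alpha(H)+1$. Because $A$ is independent, no vertex of $A$ is adjacent to another vertex of $A$. Hence $N_H(A)\subseteq V(H)\setminus A$, and so $|N_H(A)|\le |V(H)|-\alpha(H)$. Substituting this bound yields
\[
d_H(a)\le |V(H)|-2\alpha(H)+1,
\]
which is the first inequality.

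\emph{Step 3: pass to the complement.} Since $d_G(a)=|V(H)|-1-d_H(a)$, the first inequality gives $d_G(a)\ge |V(H)|-1-(|V(H)|-2\alpha(H)+1)=2\alpha(H)-2$, which is the second inequality.

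The only step needing an idea is Step~1. Existence of a maximum independent set through $a$ is what makes the bound $|A|=\alpha(H)$ available, and it uses both hypotheses: the absence of isolated vertices supplies the edge $ab$, and criticality supplies the larger independent set in $H-ab$. Steps~2 and~3 are routine once $A$ is in hand.
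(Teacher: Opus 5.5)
Your argument is correct. Since $I$ is independent in $H-ab$, its only possible $H$-edge is $ab$. Hence $A=I\setminus\{b\}$ is a maximum independent set through $a$, and Lemma~\ref{lem:suranyi} together with $N_H(A)\subseteq V(H)\setminus A$ finishes at once.

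This is a different route from the paper, which gives no proof of Lemma~\ref{lem:degree}. It simply imports Hajnal's theorem from \cite{Hajnal1965} (see also \cite[Theorem~3.4]{Lovasz1994}). Your derivation shows that, inside the paper, this citation is logically redundant: Lemma~\ref{lem:degree} is a two-line corollary of Lemma~\ref{lem:suranyi}, which is already assumed, and this is how Sur\'anyi's inequality contains Hajnal's bound. The cost is that everything rests on the stronger imported statement. As a proof of Hajnal's theorem in its own right, it is only as independent as the proof of Lemma~\ref{lem:suranyi} you rely on.

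An alternative closer to the paper's own machinery uses the counting pattern of \eqref{eq:decisive}. For each neighbour $y$ of $a$, let $S_y\ni a$ be the maximum independent set obtained from the critical edge $ay$ by deleting $y$, and let $U$, $C$ be the union and intersection of these sets. Then $U$, $N_H(a)$ and $N_H(C\setminus\{a\})$ are pairwise disjoint; the last two are disjoint because $y$ and $C\setminus\{a\}$ lie together in the $(\alpha(H)+1)$-set for $ay$. Lemma~\ref{lem:union}, together with $|N_H(C\setminus\{a\})|\ge|C|-1$ from Lemma~\ref{lem:expansion} (with $z=a$), gives $|V(H)|\ge 2\alpha(H)+d_H(a)-1$. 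In the paper, Lemma~\ref{lem:expansion} is itself derived from Lemma~\ref{lem:suranyi}, so this is not more elementary there. It does show that Hajnal's bound needs only the elementary Lemma~\ref{lem:union} plus the weaker expansion property $|N(I)|\ge|I|$, rather than the full Sur\'anyi inequality.
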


The first inequality is Hajnal's classical degree bound
\cite{Hajnal1965}; see also
\cite[Theorem~3.4]{Lovasz1994}.


\begin{lemma}\label{lem:expansion}
Let $H$ be $\alpha$-critical without isolated vertices, let $z\in V(H)$,
and let $W=H[V(H)\setminus N_H[z]]$.
Every independent set $I$ of $W$ satisfies
\[
|N_{W}(I)|\ge |I|.
\]
\end{lemma}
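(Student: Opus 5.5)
The plan is to apply Lemma~\ref{lem:suranyi} to the independent set obtained by adding $z$ to $I$, with $a=z$ as the distinguished vertex. This compares $d_H(z)$ with the neighbourhood of $I\cup\{z\}$. Since the neighbourhood of $z$ appears on both sides, it cancels, and what remains is exactly the expansion inequality inside $W$.

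If $I=\emptyset$ there is nothing to prove, so assume $I\neq\emptyset$. Set $A=I\cup\{z\}$.

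\emph{$A$ is independent in $H$.} The set $I$ is independent in $W$, and $W$ is an induced subgraph of $H$, so $I$ is independent in $H$. Every vertex of $I$ lies in $V(H)\setminus N_H[z]$, so it is distinct from $z$ and not adjacent to $z$. Hence $|A|=|I|+1$.

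\emph{Apply Lemma~\ref{lem:suranyi}.} Since $H$ is $\alpha$-critical without isolated vertices, taking $a=z$ gives
\[
d_H(z)\le |N_H(A)|-|A|+1=|N_H(A)|-|I|.
\]

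\emph{Bound $|N_H(A)|$.} We have $N_H(A)=N_H(z)\cup N_H(I)$. Take any $y\in N_H(I)$. Then $y\ne z$, because no vertex of $I$ is adjacent to $z$. Also $y\notin I$, because $I$ is independent. So either $y\in N_H(z)$, or $y$ lies in $V(H)\setminus N_H[z]=V(W)$. In the latter case $y$ is adjacent in $W$ to some vertex of $I$, so $y\in N_W(I)$. Therefore
\[
N_H(A)\subseteq N_H(z)\cup N_W(I),
\qquad\text{hence}\qquad
|N_H(A)|\le d_H(z)+|N_W(I)|.
\]

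\emph{Conclude.} Substituting into the previous display gives
\[
d_H(z)\le d_H(z)+|N_W(I)|-|I|,
\]
that is, $|N_W(I)|\ge |I|$.

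The only point needing care is the containment $N_H(I)\subseteq N_H(z)\cup N_W(I)$. It uses the definition of $W$ as an induced subgraph, together with the facts that $z\notin N_H(I)$ and $I\cap N_H(I)=\emptyset$. No step here is a real obstacle; the argument is a direct specialization of Surányi's inequality.
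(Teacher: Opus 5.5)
Your proof is correct and is the paper's argument: apply Lemma~\ref{lem:suranyi} to the independent set $I\cup\{z\}$ at the vertex $z$, then cancel $d_H(z)$. The paper states the decomposition as an identity $N_H(I\cup\{z\})=N_H(z)\,\dot\cup\,N_W(I)$, whereas you prove only the containment; the containment is all the upper bound requires.
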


\begin{proof}
The set $I\cup\{z\}$ is independent in $H$. Moreover,
\[
N_H(I\cup\{z\})=N_H(z)\,\dot\cup\,N_{W}(I).
\]
To check this identity, a neighbor of $I$ outside $N_H(z)$ is
different from $z$, since $I\subseteq V(W)$, and thus belongs to $V(W)$.
Apply Lemma~\ref{lem:suranyi} to $I\cup\{z\}$ at $z$ to obtain
\[
d_H(z)\le d_H(z)+|N_{W}(I)|-|I|.
\]
Canceling $d_H(z)$ proves the assertion.
\end{proof}

\begin{lemma}
\label{lem:union}
Let $\mathcal I=\{I_1,\ldots,I_s\}$ be a family of maximum
independent sets of a graph $H$, where $|I_i|=\alpha(H)$. Then
\[
\left|\bigcap_{i=1}^s I_i\right|
+\left|\bigcup_{i=1}^s I_i\right|
\ge2\alpha(H).
\]
\end{lemma}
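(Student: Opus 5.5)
We prove Lemma~\ref{lem:union} by induction on $s$, with the key input being the classical fact that the intersection and union of two maximum independent sets behave well under the bipartite matching structure between them. Write $\alpha=\alpha(H)$, $J_s=\bigcap_{i\le s}I_i$ and $U_s=\bigcup_{i\le s}I_i$. The case $s=1$ is trivial, since $|J_1|+|U_1|=2\alpha$. For the inductive step it suffices to show
\[
|J_{s}|+|U_{s}|\ge |J_{s-1}|+|U_{s-1}|,
\qquad\text{equivalently}\qquad
|U_s\setminus U_{s-1}|\ge |J_{s-1}\setminus J_s| .
\]
That is, the number of new vertices contributed by $I_s$ is at least the number of vertices of the old intersection that $I_s$ misses.

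To prove this, let $X=J_{s-1}\setminus I_s$ and $Y=I_s\setminus U_{s-1}$, so that $J_{s-1}\setminus J_s=X$ and $U_s\setminus U_{s-1}=Y$. We claim $|Y|\ge|X|$. Consider the set
\[
T=(I_s\setminus Y)\cup X .
\]
First, $T$ is independent. The set $X\subseteq J_{s-1}$ lies in every $I_i$ with $i<s$. Each vertex $u\in I_s\setminus Y$ lies in $U_{s-1}$, so $u\in I_i$ for some $i<s$; since $I_i$ is independent and contains $X$, the vertex $u$ has no neighbor in $X$. Moreover, $X$ and $I_s\setminus Y$ are each independent, and they are disjoint because $X\cap I_s=\emptyset$. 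Hence $T$ is independent, and therefore
\[
\alpha\ge |T|=|I_s|-|Y|+|X|=\alpha-|Y|+|X|,
\]
which gives $|Y|\ge|X|$.

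Combining this with the induction hypothesis yields $|J_s|+|U_s|\ge|J_{s-1}|+|U_{s-1}|\ge2\alpha$, which completes the induction. The only real step is the exchange argument above: it replaces the part of $I_s$ outside the old union by the part of the old intersection missed by $I_s$. No hypothesis beyond maximality is needed, so the lemma holds for arbitrary graphs $H$, consistent with its statement. I do not expect any serious obstacle; the one point needing care is verifying that $I_s\setminus Y$ is nonadjacent to $X$, and this uses exactly that each of its vertices lies in some earlier $I_i$, which contains all of $X$.
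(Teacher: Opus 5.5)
Your proof is correct and follows the paper's argument: the same induction on $s$, with your exchange set $T=(I_s\cap U_{s-1})\cup(J_{s-1}\setminus I_s)$ equal to the paper's independent set $\bigl(I_s\cap\bigcup_{i<s}I_i\bigr)\cup\bigcap_{i<s}I_i$, which gives the same inequality $|I_s\setminus U_{s-1}|\ge|J_{s-1}\setminus I_s|$. The ``bipartite matching structure'' mentioned in your opening sentence is never used, and your argument does not need it.
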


\begin{proof}
We proceed by induction on $s$. The case $s=1$ is immediate. Suppose
$s\ge2$. There are no edges between
$\bigcap_{i=1}^{s-1}I_i$ and $\bigcup_{i=1}^{s-1}I_i$: if
$c\in\bigcap_{i=1}^{s-1}I_i$ and
$d\in\bigcup_{i=1}^{s-1}I_i$, then $d\in I_i$ for some
$i<s$, while $c\in I_i$. Hence
\[
\left(I_s\cap\bigcup_{i=1}^{s-1}I_i\right)
\cup\left(\bigcap_{i=1}^{s-1}I_i\right)
\]
is independent, and therefore
$
\alpha(H)
\ge
\left|
\left(I_s\cap\bigcup_{i=1}^{s-1}I_i\right)
\cup\left(\bigcap_{i=1}^{s-1}I_i\right)
\right|=
\left|I_s\cap\bigcup_{i=1}^{s-1}I_i\right|
+
\left|\left(\bigcap_{i=1}^{s-1}I_i\right)\setminus I_s\right|$
$=
\alpha(H)
-\left|I_s\setminus\bigcup_{i=1}^{s-1}I_i\right|
+\left|\left(\bigcap_{i=1}^{s-1}I_i\right)\setminus I_s\right|,
$Which gives
$
\left|I_s\setminus\bigcup_{i=1}^{s-1}I_i\right|
\ge
\left|\left(\bigcap_{i=1}^{s-1}I_i\right)\setminus I_s\right|.
$
Thus the union gains at least as many vertices as the intersection
loses when $I_s$ is added. 

By the induction hypothesis,
\[
\left|\bigcap_{i=1}^sI_i\right|
+\left|\bigcup_{i=1}^sI_i\right|
\ge
\left|\bigcap_{i=1}^{s-1}I_i\right|
+\left|\bigcup_{i=1}^{s-1}I_i\right|
\ge2\alpha(H).
\]
\end{proof}
\section{Proofs of the main results}

\begin{proposition}\label{prop:no-isolates}
Let $H$ be an $\alpha$-critical graph of order $n$ with no
isolated vertices, and suppose that $\alpha(H)=r\ge2$.
Let $G=\overline H$. Then every vertex $v\in V(G)$ satisfies
\[
\sum_{w\in N_G(v)}d_G(w)
\ge (r-2)d_G(v)+(r-1)(n-1).
\]
\end{proposition}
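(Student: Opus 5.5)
We rewrite the walk count in $H$ and reduce the proposition to an estimate on the induced graph $W=H[V(H)\setminus N_H[v]]$. Put $d=d_G(v)$, so that $N_G(v)=V(W)$ and $|V(W)|=d$. Every walk $v\,w\,u$ of length two in $G$ either returns to $v$, which gives $d$ walks, or ends at some $u\neq v$. Grouping by the endpoint gives
\[
\sum_{w\in N_G(v)}d_G(w)=d+\sum_{u\in N_H(v)}|N_G(u)\cap V(W)|+\sum_{u\in V(W)}\bigl(d-1-d_W(u)\bigr).
\]

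For $u\in N_H(v)$ we use $\alpha$-criticality of the edge $uv$. The graph $H-uv$ has an independent set of size $r+1$, which must contain both $u$ and $v$. Its remaining $r-1$ vertices are non-adjacent in $H$ to $v$, so they lie in $V(W)$, and they are also $G$-neighbours of $u$. Hence each of the $n-1-d$ vertices of $N_H(v)$ contributes at least $r-1$. Substituting, the proposition reduces to
\[
\sum_{u\in N_H(v)}\bigl(|N_G(u)\cap V(W)|-(r-1)\bigr)+\sum_{u\in V(W)}\bigl(d-1-d_W(u)\bigr)\ \ge\ (2r-4)d .
\]
Call the first sum the \emph{surplus} from $N_H(v)$. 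The second sum is $2e(\overline W)$.

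If $\overline W$ had minimum degree at least $2r-4$ we would be done, and Lemma~\ref{lem:degree} nearly gives this. Each $u\in V(W)$ has at least $2r-2$ non-neighbours in $H$ other than itself. One of them is $v$, so at least $2r-3$ lie in $V(W)\cup N_H(v)$. The deficiency of $u$ inside $\overline W$ therefore equals the number of its $H$-non-neighbours in $N_H(v)$ beyond a small allowance. These non-adjacencies are exactly what make the surplus positive: every non-edge between $u\in V(W)$ and $x\in N_H(v)$ adds one to $|N_G(x)\cap V(W)|$.

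The plan is thus a charging argument. Each $u\in V(W)$ with $d_{\overline W}(u)<2r-4$ is charged to the vertices $x\in N_H(v)$ that are $H$-non-adjacent to it. We then need to show that a single $x$ is not charged more than its surplus $|N_G(x)\cap V(W)|-(r-1)$. This is where the multiplicity estimate enters. For a fixed $x\in N_H(v)$, consider the family of $(r+1)$-sets produced by criticality, namely the maximum independent sets of $H-xv$ through $x$ and $v$. By Lemma~\ref{lem:union}, applied to $H-xv$ with independence number $r+1$, we get
\[
|\bigcap|+|\bigcup|\ge 2(r+1).
\]
This lower-bounds the number of distinct vertices of $V(W)$ covered by these sets, and hence the surplus at $x$.

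To bound the charge from below on the $W$ side, we apply Lemma~\ref{lem:expansion} (Sur\'anyi's inequality at $z=v$) to independent sets $I$ of $W$. This gives $|N_W(I)|\ge|I|$, which limits how many vertices of $W$ can simultaneously be deficient in $\overline W$. The strategy is to take $I$ to be the common intersection of the relevant maximum independent sets.

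The main obstacle is matching the two counts exactly so that the constant comes out as $(r-1)(n-1)$ with no loss. This requires choosing the family in Lemma~\ref{lem:union} so that its intersection is an independent set of $W$ to which Lemma~\ref{lem:expansion} applies. The inequality $|\bigcap|+|\bigcup|\ge 2r$ must then translate into ``the multiplicity of $V(W)$-vertices across these sets is at least $2r-4$ on average.'' I would first try to prove this multiplicity inequality pointwise for each $x\in N_H(v)$, and then sum over $x$.
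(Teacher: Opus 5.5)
\textbf{There is a genuine gap: the proposal never proves the inequality that carries the whole proposition, and the tools are applied at the wrong vertices.}

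Your reduction itself is correct. The walk decomposition holds, and each $u\in N_H(v)$ has at least $r-1$ $G$-neighbours in $X=V(W)$. Put $Y=N_H(v)$ and double count the $G$-edges between $X$ and $Y$. Then your target ``surplus $+\,2e(\overline W)\ge(2r-4)d$'' is exactly equivalent to
\[
\sum_{z\in X}\bigl(d_G(z)-(2r-3)\bigr)\ \ge\ (r-1)|Y| .
\]
This is the entire content of the proposition. You fix no charging rule and establish no estimate that would make one balance. Lemma~\ref{lem:degree} alone gives only $\sum_{z\in X}(d_G(z)-2r+3)\ge|X|$. That is too weak whenever $(r-1)|Y|>|X|$, for instance for $H=K_m\cup(r-1)K_2$ with $v$ in the $K_m$ and $m\ge4$.

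The two tools you name point the wrong way.
\begin{itemize}
\item The property $|N_W(I)|\ge|I|$ from Lemma~\ref{lem:expansion} at $v$ is preserved when $H$-edges are added inside $X$. Adding such edges only increases deficiencies in $\overline W=G[X]$, so this property cannot cap them.
\item Lemma~\ref{lem:union} applied to all witnesses of a fixed edge $xv$ gives only $|N_G(x)\cap X|\ge 2r-2-c_x$, where $c_x$ counts the vertices of $X$ common to all these witnesses. This bounds the surplus at $x$ from below, but says nothing about how much charge arrives at $x$.
\end{itemize}

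The missing idea is to make the pointwise estimate at the paying vertices $z\in X$, not at $x\in Y$.
\begin{itemize}
\item Fix one witness $I_y\subseteq X$ for each $y\in Y$ and put $F_z=\{y\in Y: z\in I_y\}$. Then $\sum_{z\in X}|F_z|=(r-1)|Y|$, so it suffices to show $d_G(z)\ge |F_z|+2r-3$.
\item Apply Lemma~\ref{lem:union} to the maximum independent sets $\{v\}\cup I_y$ $(y\in F_z)$ of $H$ itself. Writing $D$ and $C$ for the union and intersection of the sets $I_y\setminus\{z\}$, this gives $|D|+|C|\ge 2r-4$.
\item Apply Lemma~\ref{lem:expansion} at $z$, not at $v$, with $W_z=H[V(H)\setminus N_H[z]]$. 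Its vertex set is $N_G(z)$.
\item This set contains the pairwise disjoint sets $\{v\}$, $F_z$ (because $I_y\cap N_H(y)=\varnothing$), $D$, and $N_{W_z}(C)$. The last is disjoint from the others because $C$ has no $H$-neighbours among $v$, $F_z$ or $D$, and it has size at least $|C|$.
\end{itemize}

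In your charging language, the deficiency of $z$ is then at most $|(N_G(z)\cap Y)\setminus F_z|$. So $z$ need only charge those $x$ with $z\notin I_x$. Each $x$ then receives at most $|(N_G(x)\cap X)\setminus I_x|$, which is exactly its surplus.
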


\begin{proof}
Fix $v\in V(G)$ and let
\[
X=V(H)\setminus N_H[v]=N_G(v),\qquad Y=N_H(v).
\]
For each $y\in Y$, choose and fix a set $I_y\subseteq X$
of size $r-1$ such that $\{v,y\}\cup I_y$ is independent
in $H-vy$.There may be more than one such set for a given $y$;
we choose one arbitrarily and keep this choice fixed throughout
the proof. Such a choice is possible by the
$\alpha$-criticality of $H$.
In particular, $I_y$ is independent in $H$ and
$I_y\cap N_H(y)=\varnothing$. Moreover,
\[
|I_y|=r-1,\qquad
|\{v\}\cup I_y|=r,\qquad
|\{v,y\}\cup I_y|=r+1.
\]

Let
\[
\mathcal I=\{I_y:y\in Y\}
\]
be the family of chosen sets. Distinct indices may correspond
to the same set; the indices are retained in the definition
of $F_z$ below.
For each $z\in X$, define
\[
F_z=\{y\in Y:I_y\in\mathcal I,\ z\in I_y\}.
\]
We claim that
\begin{equation}\label{eq:multiplicity}
|F_z|\le d_G(z)-2r+3\qquad(z\in X).
\end{equation}

If $F_z=\varnothing$, then $|F_z|=0$.
Since $H$ is $\alpha$-critical without isolated vertices and
$\alpha(H)=r$, Lemma~\ref{lem:degree} gives
$d_H(z)\le n-2r+1$. Since $H=\overline G$, we obtain
\[
d_G(z)=n-1-d_H(z)
\ge n-1-(n-2r+1)=2r-2.
\]
Consequently,
\[
|F_z|=0<1\le d_G(z)-2r+3,
\]
so \eqref{eq:multiplicity} holds in this case.

Suppose $F_z\ne\varnothing$. Let
$W=H[V(H)\setminus N_H[z]]$ and let
\[
D=\bigcup_{y\in F_z}(I_y\setminus\{z\}),\qquad
C=\bigcap_{y\in F_z}(I_y\setminus\{z\}).
\]
The sets $\{v\}\cup I_y$, for $y\in F_z$, are maximum independent sets
of $H$. Their union and intersection are $\{v,z\}\cup D$ and
$\{v,z\}\cup C$, respectively. Lemma~\ref{lem:union} gives
\begin{equation}\label{eq:DC}
|D|+|C|\ge2r-4.
\end{equation}

The sets $\{v\}$, $F_z$, and $D$ are pairwise disjoint subsets of
$V(W)$. Moreover, $C$ has no neighbors in any of them. Indeed,
$C\subseteq X$, so no vertex of $C$ is adjacent to $v$; for every
$y\in F_z$, we have $C\subseteq I_y$, so no vertex of $C$ is
adjacent to $y$; and if $c\in C$ and $d\in D$, then $d\in I_y$ for
some $y\in F_z$, while $c\in I_y$, so $cd\notin E(H)$. Thus
$N_{W}(C)$ is disjoint from all three sets. Since $C$ is an independent
set of $W$, Lemma~\ref{lem:expansion} and \eqref{eq:DC} give
\begin{equation}\label{eq:decisive}
\begin{aligned}
d_G(z)=|V(W)|
&\ge1+|F_z|+|D|+|N_{W}(C)|\\
&\ge1+|F_z|+|D|+|C|\\
&\ge |F_z|+2r-3.
\end{aligned}
\end{equation}
This proves \eqref{eq:multiplicity} for every $r\ge2$.

Finally, counting incidences between $z\in X$ and the chosen $I_y$
gives $\sum_{z\in X}|F_z|=\sum_{y\in Y}|I_y|=(r-1)|Y|$. Summing
\eqref{eq:multiplicity} over $X$ therefore yields
\[
\begin{aligned}
\sum_{z\in N_G(v)}d_G(z)
&\ge(2r-3)|X|+(r-1)|Y|\\
&=(r-2)d_G(v)+(r-1)(n-1),
\end{aligned}
\]
as required.
\end{proof}
\begin{figure}[htbp]
\centering
\begin{tikzpicture}[font=\small,>=Latex]
  \draw[rounded corners=7pt,gray!70] (-0.5,-0.6) rectangle (12.8,2.8);
  \node[anchor=north west] at (-0.32,2.72) {$V(W)=V(H)\setminus N_H[z]$};
  \draw[rounded corners,fill=gray!8] (0,0) rectangle (1.1,1.65);
  \node at (.55,.83) {$\{v\}$};
  \draw[rounded corners,fill=gray!8] (1.65,0) rectangle (3.35,1.65);
  \node at (2.5,.83) {$F_z$};
  \draw[rounded corners,fill=blue!5] (3.9,0) rectangle (7.0,1.65);
  \node at (4.3,1.29) {$D$};
  \draw[rounded corners,fill=blue!15] (5.55,.4) rectangle (6.65,1.2);
  \node at (6.1,.8) {$C$};
  \draw[rounded corners,fill=green!9] (9.1,0) rectangle (12.25,1.65);
  \node at (10.68,1.1) {$N_{W}(C)$};
  \node[font=\footnotesize] at (10.68,.53) {at least $|C|$ vertices};
  \draw[->,thick,green!45!black] (6.65,.8)--(9.1,.8);
  \node[font=\footnotesize,align=center] at (8.06,2.15)
    {all neighbors of $C$\\inside $W$ lie here};
\end{tikzpicture}
\caption{The four disjoint sets counted in \eqref{eq:decisive}.
The arrow indicates containment of possible neighbors, not complete
adjacency. These sets need not cover $W$.}
\label{fig:count}
\end{figure}

For an integer $r\ge1$, a graph $G$, and $v\in V(G)$, define the
local surplus by
\[
g_r(G,v)=\sum_{w\in N_G(v)}d_G(w)
-(r-2)d_G(v)-(r-1)(|V(G)|-r+1).
\]
In the reduction below, deleting $t$ universal vertices preserves
the surplus at every remaining vertex when the parameter is changed
from $r$ to $r-t$; see \eqref{eq:remove-universal}.If $G$ is $K_{r+1}$-saturated with no universal vertex and $r\ge2$,
Proposition~\ref{prop:no-isolates} gives
$g_r(G,v)\ge(r-1)(r-2)$ for every $v\in V(G)$.

\medskip
\begin{proof}[Proof of Theorem~\ref{thm:local}]
Let $t$ be the number of universal vertices of $G$. Delete them to
obtain a graph $G_0$ of order $m=n-t$, and let $k=r-t$.
Set $H_0=\overline{G_0}=H[V(G_0)]$.
Then $G=K_t\vee G_0$, and $H_0$ is obtained from $H$ by deleting
its $t$ isolated vertices. Hence $H_0$ is $\alpha$-critical without
isolated vertices, with $\alpha(H_0)=k$.
Since $n>r$, the graph $H$ has an edge, so $G_0$ is nonempty
and $k\ge1$.

For $v\in V(G_0)$, degrees and two-walk counts transform as
\[
\begin{aligned}
d_G(v)&=d_{G_0}(v)+t,\\
\sum_{w\in N_G(v)}d_G(w)
&=\sum_{w\in N_{G_0}(v)}d_{G_0}(w)+t\,d_{G_0}(v)+t(n-1).
\end{aligned}
\]
Substituting these expressions, together with $r=k+t$ and $n=m+t$,
gives the identity
\begin{equation}\label{eq:remove-universal}
\begin{aligned}
g_r(G,v)
&=\sum_{w\in N_{G_0}(v)}d_{G_0}(w)
  +t\,d_{G_0}(v)+t(n-1)
-(r-2)(d_{G_0}(v)+t)-(r-1)(n-r+1)\\
&=\sum_{w\in N_{G_0}(v)}d_{G_0}(w)
  -(k-2)d_{G_0}(v)+t(m-k+1)-(k+t-1)(m-k+1)\\
&=\sum_{w\in N_{G_0}(v)}d_{G_0}(w)
  -(k-2)d_{G_0}(v)-(k-1)(m-k+1)\\
&=g_k(G_0,v).
\end{aligned}
\end{equation}
If $k\ge2$, Proposition~\ref{prop:no-isolates} gives
\begin{equation}\label{eq:non-universal-surplus}
\begin{aligned}
g_r(G,v)=g_k(G_0,v)
&\ge(k-1)(m-1)-(k-1)(m-k+1)\\
&=(k-1)(k-2)\ge0.
\end{aligned}
\end{equation}
If $k=1$, then $H_0$ is complete, $G_0$ is edgeless, and
$g_1(G_0,v)=0$ directly.

For any universal vertex $u$, all other vertices are its neighbors,
so
\begin{equation}\label{eq:universal-surplus}
\begin{aligned}
g_r(G,u)
&=\sum_{w\in N_G(u)}d_G(w)
  -(r-2)d_G(u)-(r-1)(n-r+1)\\
&=2e(G)-2(r-1)n+r(r-1)\\
&=2e(G_0)+2tm+t(t-1)
-2(k+t-1)(m+t)+(k+t)(k+t-1)\\
&=2e(G_0)-2(k-1)m+k(k-1)\\
&\ge 2(k-1)m-2(k-1)m+k(k-1)\\
&=k(k-1)\ge0.
\end{aligned}
\end{equation}
The second equality uses $d_G(u)=n-1$ and
$\sum_{w\in N_G(u)}d_G(w)=2e(G)-(n-1)$, since $u$ is universal.
The third equality uses $n=m+t$, $r=k+t$, and
$e(G)=e(G_0)+tm+\binom t2$.
For the inequality, applying \eqref{eq:degree} to $H_0$ gives
$d_{G_0}(w)\ge2k-2$ for every $w\in V(G_0)$.
Summing over $V(G_0)$ yields $2e(G_0)\ge2(k-1)m$.
Finally, $k(k-1)\ge0$ because $k\ge1$.
This proves \eqref{eq:local} in all cases.

If $G$ has no universal vertex, Proposition~\ref{prop:no-isolates}
gives \eqref{eq:strong}. To prove sharpness, take
$G=\overline{C_{2r+1}}$, so that $H=C_{2r+1}$.
The graph $H$ is $\alpha$-critical
with $\alpha(H)=r$, and $G$ is $(2r-2)$-regular. Hence
\[
\sum_{w\in N_G(v)}d_G(w)=(2r-2)^2
=(r-2)(2r-2)+(r-1)\,2r,
\]
which is equality in \eqref{eq:strong}. Thus, for every fixed $r$,
the additional constant cannot be increased in a bound valid for all $n$.
\end{proof}

\begin{proof}[Proof of Theorem~\ref{thm:spectral}]
The graph $G$ is connected: every nonadjacent pair has a common
$(r-1)$-clique by saturation. Let $A$ be its adjacency matrix,
$\rho=\rho(G)$, and $q>0$ a Perron eigenvector.  Since
$(A\one)_v=d_G(v)$ and
$(A^2\one)_v=\sum_{w\in N_G(v)}d_G(w)$, the vector
\[
A^2\one-(r-2)A\one-(r-1)(n-r+1)\one
\]
has $v$th entry $g_r(G,v)$, which is nonnegative by
Theorem~\ref{thm:local}. Since $A$ is symmetric and $Aq=\rho q$,
\[
q^{\mathsf T}A=(Aq)^{\mathsf T}=\rho q^{\mathsf T},
\qquad
q^{\mathsf T}A^2=\rho^2q^{\mathsf T}.
\]
Consequently,
\begin{equation}\label{eq:perron}
\begin{aligned}
\sum_v q_v g_r(G,v)
&=q^{\mathsf T}\bigl(A^2\one-(r-2)A\one
  -(r-1)(n-r+1)\one\bigr)\\
&=\bigl(\rho^2-(r-2)\rho-(r-1)(n-r+1)\bigr)
  \sum_v q_v\\
&\ge0.
\end{aligned}
\end{equation}
The quadratic has a negative root and a positive root, and thus
$\rho$ is at least its positive root, as asserted.

Equality in the spectral bound forces $g_r(G,v)=0$ at every vertex,
because all $q_v$ are positive. Use $t$, $G_0$, and $k=r-t$ from the preceding
proof. If $t=0$, Proposition~\ref{prop:no-isolates} gives
$g_r(G,v)\ge(r-1)(r-2)>0$, since $r\ge3$.
If $t>0$ and $k\ge2$, \eqref{eq:universal-surplus} gives
$g_r(G,u)\ge k(k-1)>0$ at a universal vertex.
Both alternatives are impossible, so $k=1$. Consequently
$t=r-1$ and $G_0$ is edgeless, yielding
$G\cong K_{r-1}\vee(n-r+1)K_1$.

Conversely, let $G=K_{r-1}\vee(n-r+1)K_1$, with clique part
$C$ and independent-set part $I$. Every clique has order at most
$r$, while adding any missing edge $uv$ within $I$ creates a
$K_{r+1}$ on $C\cup\{u,v\}$. Thus $G$ is $K_{r+1}$-saturated.

The partition $(C,I)$ is equitable, with quotient matrix
\[
Q=\begin{pmatrix}
r-2 & n-r+1\\
r-1 & 0
\end{pmatrix}.
\]
Let $\lambda$ be the positive root of
\[
\lambda^2-(r-2)\lambda-(r-1)(n-r+1)=0.
\]
Then $(\lambda,r-1)^{\mathsf T}$ is a positive eigenvector of $Q$.
Assigning the value $\lambda$ to each vertex of $C$ and $r-1$
to each vertex of $I$ gives a positive vector $x$ satisfying
$A(G)x=\lambda x$. Since $G$ is connected, the
Perron--Frobenius theorem yields $\rho(G)=\lambda$, proving
equality in the claimed bound.
\end{proof}

\begin{proof}[Proof of Corollary~\ref{cor:no-universal-spectral}]
As in the proof of Theorem~\ref{thm:spectral}, $G$ is connected.
Let $A$ be its adjacency matrix, $\rho=\rho(G)$, and $q>0$ a Perron
eigenvector. By \eqref{eq:strong}, the vector
\[
A^2\one-(r-2)A\one-(r-1)(n-1)\one
\]
is entrywise nonnegative. Taking its inner product with $q$ and using
the symmetry of $A$, we obtain
\[
\bigl(\rho^2-(r-2)\rho-(r-1)(n-1)\bigr)\sum_v q_v\ge0.
\]
Taking the positive root gives \eqref{eq:strong-spectral}.
For $G=\overline{C_{2r+1}}$, we have $n=2r+1$ and $\rho(G)=2r-2$,
and the right-hand side of \eqref{eq:strong-spectral} equals $2r-2$.
\end{proof}

\begin{remark}
The local inequality and the spectral lower bound also hold for
$r=2$, but the spectral equality graph is not unique: $C_5$ already
gives an additional example, since $\rho(C_5)=2=\sqrt{5-1}$.
This is why Theorem~\ref{thm:spectral} assumes $r\ge3$.
\end{remark}

\section*{Acknowledgements}

This work was supported in part by the National Natural Science
Foundation of China under Grants 12371354 and W2521102. Additional
support was provided by the Montenegrin--Chinese Science and
Technology Cooperation Project under Grant 4-3 and by the Science
and Technology Commission of Shanghai Municipality under Grant
25LN3200600.


\begin{thebibliography}{9}

\bibitem{ALOZ2025}
J.~Ai, P.~Liu, S.~O, and J.~Zhang,
The minimum spectral radius of $tP_3$- or $K_5$-saturated
graphs via the number of $2$-walks,
\emph{Electron. J. Combin.} \textbf{32} (2025), Paper~P1.30.
\href{https://doi.org/10.37236/12492}
{doi:10.37236/12492}.

\bibitem{EHM1964}
P.~Erd\H{o}s, A.~Hajnal, and J.~W.~Moon,
A problem in graph theory,
\emph{Amer. Math. Monthly} \textbf{71} (1964), 1107--1110.

\bibitem{Hajnal1965}
A.~Hajnal,
A theorem on $k$-saturated graphs,
\emph{Canad. J. Math.} \textbf{17} (1965), 720--724.
\href{https://doi.org/10.4153/CJM-1965-072-1}
{doi:10.4153/CJM-1965-072-1}.

\bibitem{KKKO2020}
J.~Kim, S.-J.~Kim, A.~V.~Kostochka, and S.~O,
The minimum spectral radius of $K_{r+1}$-saturated graphs,
\emph{Discrete Math.} \textbf{343} (2020), Article~112068.
\href{https://doi.org/10.1016/j.disc.2020.112068}
{doi:10.1016/j.disc.2020.112068}.

\bibitem{KKOSW2023}
J.~Kim, A.~V.~Kostochka, S.~O, Y.~Shi, and Z.~Wang,
A sharp lower bound for the spectral radius in $K_4$-saturated graphs,
\emph{Discrete Math.} \textbf{346} (2023), Article~113231.
\href{https://doi.org/10.1016/j.disc.2022.113231}
{doi:10.1016/j.disc.2022.113231}.

\bibitem{LM2014}
V.~E.~Levit and E.~Mandrescu,
A Set and Collection Lemma,
\emph{Electron. J. Combin.} \textbf{21} (2014), Paper~P1.40.
\href{https://doi.org/10.37236/2514}
{doi:10.37236/2514}.

\bibitem{Lovasz1994}
L.~Lov\'asz,
Stable sets and polynomials,
\emph{Discrete Math.} \textbf{124} (1994), 137--153.

\bibitem{O2025}
S.~O,
On the minimum spectral radius in an $n$-vertex $K_{r+1}$-saturated graph,
Problem 13B-2, in \emph{Communications in Combinatorics},
Shanghai, December 12--15, 2025, p.~8.
\href{https://www.ibs.re.kr/ecopro/wp-content/uploads/2025/12/Communications-in-Combinatorics-pamphlet-20251212-15.pdf}{Official problem booklet}.

\bibitem{WH2024}
D.~Wang and Y.~Hou,
The minimum spectral radius for $K_{r+1}$-saturated graphs
with $r=4,5$,
\emph{Discrete Math.} \textbf{347} (2024), Article~114110.
\href{https://doi.org/10.1016/j.disc.2024.114110}
{doi:10.1016/j.disc.2024.114110}.

\end{thebibliography}
\end{document}